\documentclass[preprint,12pt]{elsarticle}

\usepackage[T1]{fontenc}
\usepackage{lmodern}
\usepackage{microtype}
\usepackage{amsmath,amssymb,amsthm,mathtools}
\usepackage{enumitem}
\usepackage{xcolor}
\usepackage[colorlinks=true,citecolor=blue!55!black,linkcolor=blue!55!black,urlcolor=blue!55!black]{hyperref}

\biboptions{sort&compress}

\newtheorem{theorem}{Theorem}[section]
\newtheorem{lemma}[theorem]{Lemma}
\newtheorem{proposition}[theorem]{Proposition}
\newtheorem{corollary}[theorem]{Corollary}
\theoremstyle{definition}
\newtheorem{definition}[theorem]{Definition}
\theoremstyle{remark}
\newtheorem{remark}[theorem]{Remark}

\newcommand{\vc}{\operatorname{VC}}

\newcommand{\M}{\mathsf{M}}
\newcommand{\A}{\mathcal{A}}
\newcommand{\B}{\mathcal{B}}
\newcommand{\C}{\mathcal{C}}
\newcommand{\D}{\mathcal{D}}
\newcommand{\F}{\mathcal{F}}
\newcommand{\Hh}{\mathcal{H}}
\newcommand{\restr}[2]{#1\!\upharpoonright_{#2}}

\begin{document}

\begin{frontmatter}

\title{Recursive lower bounds for uniform set systems of bounded VC-dimension}

\author[1]{Xiaochen Zhao\corref{cor1}}
\ead{2250501013@cnu.edu.cn}

\author[1]{Gennian Ge\fnref{fund}}
\ead{gnge@zju.edu.cn}

\address[1]{School of Mathematical Sciences, Capital Normal University, Beijing, China}

\cortext[cor1]{Corresponding author.}

\fntext[fund]{Gennian Ge is supported by the National Key Research and Development Program of China under Grant 2025YFC3409900, the National Natural Science Foundation of China under Grant 12231014, and Beijing Scholars Program.}

\begin{abstract}
For integers $n\ge d+1$, let $\M_d(n)$ denote the maximum size of a
$(d+1)$-uniform family on an $n$-element ground set with VC-dimension at
most $d$.  For $n\ge2d+2$, the classical construction of Ahlswede and Khachatrian, later
generalized by Mubayi and Zhao, gives
\[
  \M_d(n)\ge \binom{n-1}{d}+\binom{n-4}{d-2}.
\]
We introduce a two-cover lifting construction and prove the recursive lower
bound
\[
  \M_d(n)\ge
  \binom{n-1}{d}+\binom{n-4}{d-2}+\M_{d-3}(n-5)
\]
for every $d\ge 3$ and $n\ge d+3$.  Consequently,
\[
  \M_d(n)\ge
  \binom{n-1}{d}+\binom{n-4}{d-2}+\binom{n-6}{d-3}.
\]
Thus the Mubayi--Zhao conjecture on the exact value of
$\M_d(n)$ for $n\ge2(d+2)$ is false for any $d\ge 3$.
The proof is elementary and proceeds entirely through an explicit analysis
of traces.
\end{abstract}

\begin{keyword}
VC-dimension \sep uniform set system \sep trace \sep extremal set theory
\MSC[2020] 05D05 \sep 05C65
\end{keyword}

\end{frontmatter}

\section{Introduction}

For a finite set $X$ and an integer $r\ge 0$, write
\[
  \binom{X}{r}=\{A\subseteq X:|A|=r\}.
\]
If $\F\subseteq 2^X$ and $S\subseteq X$, the trace of $\F$ on $S$ is
\[
  \restr{\F}{S}=\{F\cap S:F\in\F\}.
\]
The set $S$ is \emph{shattered} by $\F$ when
$\restr{\F}{S}=2^S$, and the VC-dimension of $\F$, denoted by
$\vc(\F)$, is the maximum cardinality of a shattered set.

For $n\ge d+1$, define
\[
  \M_d(n)=
  \max\left\{
    |\F|:
    \F\subseteq\binom{[n]}{d+1},\ 
    \vc(\F)\le d
  \right\}.
\]
Frankl and Pach~\cite{FranklPach} proved the algebraic upper bound
\[
  \M_d(n)\le \binom{n}{d}.
\]
Ahlswede and Khachatrian~\cite{AhlswedeKhachatrian} constructed a family
showing
\begin{equation}\label{eq:classical-lower}
  \M_d(n)\ge
  \binom{n-1}{d}+\binom{n-4}{d-2},
\end{equation}
and Mubayi and Zhao~\cite{MubayiZhao} produced many non-isomorphic
constructions of the same size and proposed that \eqref{eq:classical-lower}
is sharp for sufficiently large $n$.

Recent work has substantially improved the upper bound while continuing to
cite \eqref{eq:classical-lower} as the best general lower bound; see
Ge--Xu--Yip--Zhang--Zhao~\cite{GeXuYipZhangZhao},
Chao--Xu--Yip--Zhang~\cite{ChaoXuYipZhang}, and
Yang--Yu~\cite{YangYu}.  Wang, Xu, and Zhang~\cite{WangXuZhang} also
exhibited a larger isolated example in the $4$-uniform case on eight
vertices.

The purpose of this note is to give a recursive construction that strictly
improves \eqref{eq:classical-lower} for every $d\ge 3$.

\begin{theorem}\label{thm:main}
For every $d\ge 3$ and $n\ge d+3$,
\[
  \M_d(n)\ge
  \binom{n-1}{d}
  +\binom{n-4}{d-2}
  +\M_{d-3}(n-5).
\]
\end{theorem}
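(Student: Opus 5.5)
The plan is to prove Theorem~\ref{thm:main} by an explicit construction on $[n]=T\sqcup Y$, where $T=\{t_1,\dots,t_5\}$ is a set of five special vertices and $|Y|=n-5$. The family $\F=\F_0\cup\F_1$ has two parts. The base part $\F_0$ is an Ahlswede--Khachatrian-type family of size $\binom{n-1}{d}+\binom{n-4}{d-2}$; I would write it so that membership of a $(d+1)$-set depends only on its trace on $T$ and its size. The lifted part $\F_1$ starts from an extremal $(d-2)$-uniform family $\Hh\subseteq\binom{Y}{d-2}$ with $\vc(\Hh)\le d-3$ and $|\Hh|=\M_{d-3}(n-5)$. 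It adjoins to each $H\in\Hh$ a fixed $3$-subset of $T$ that no member of $\F_0$ uses as its trace on $T$. The ``two-cover'' idea is to pick two $3$-sets $P_1,P_2\subseteq T$ with $P_1\cup P_2=T$. We put $H\cup P_1$ into $\F_1$, and $P_2$ plays the role of a forbidden pattern that $\F_0$ is arranged to avoid. This makes $\F_1\cap\F_0=\emptyset$ and $|\F|=\binom{n-1}{d}+\binom{n-4}{d-2}+\M_{d-3}(n-5)$. The condition $n\ge d+3$ is exactly what makes $\Hh$ well defined, since it gives $n-5\ge d-2$.

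First I fix $\F_0$ concretely. For example, I would use the Mubayi--Zhao form in which the allowed traces on $T$ are governed by a small ``core'' on three of the special points. I then record the list $\mathcal{L}\subseteq 2^T$ of traces $F\cap T$, $F\in\F_0$, which occur, noting that $P_1\notin\mathcal{L}$. I also check that $\F_0$ alone has VC-dimension at most $d$. Since $|\F_0|$ is the classical count, this is essentially the known fact, and I can re-derive it through traces. Next comes the counting: the families are disjoint because their traces on $T$ differ.

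The core step is to show that no $(d+1)$-set $S$ is shattered by $\F$. Note that $S$ itself must be a member of $\F$, since shattering requires the trace $S$. I split by $k=|S\cap T|\in\{0,\dots,5\}$ and by whether $S\in\F_0$ or $S\in\F_1$. Write $S=S_T\sqcup S_Y$. Shattering $S$ requires that every pair $(A,B)$ with $A\subseteq S_T$ and $B\subseteq S_Y$ is realized. Traces with $A$ of a type incompatible with $P_1$ can only come from $\F_0$, and there the analysis of $\F_0$ gives a missing trace. If $S\cap T=P_1\cap S$ is large, i.e.\ $S\in\F_1$ or $S_T\supseteq P_1$, the traces with $A=P_1$ come only from $\F_1$. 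These realize on $S_Y$ exactly $\restr{\Hh}{S_Y}$, which cannot be all of $2^{S_Y}$ once $|S_Y|\ge d-2$, because $\vc(\Hh)\le d-3$. When $|S_Y|<d-2$, we have $|S_T|\ge 4$, and I would find a missing pattern on $S_T$ from the list $\mathcal{L}\cup\{P_1\}$. The point is that $\mathcal{L}\cup\{P_1\}$ still does not shatter $S_T$ in the required ``uniform-compatible'' way: the sizes of the traces on $T$ constrain the sizes of the traces on $Y$.

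I expect the main obstacle to be the mixed cases $k=3,4$, where a pattern on $S_T$ can be realized by both $\F_0$ and $\F_1$. Here the argument has to use uniformity: a trace $A\cup B$ with $|A|+|B|$ fixed and $B\subseteq S_Y$ forces $|F\cap Y\setminus S_Y|$, and this must be compared with the restriction of $\F_0$. I would first try to choose $P_2$ (the pattern removed from the base family) so that every $4$-subset of $T$ containing $P_1$ misses some trace in $\mathcal{L}\cup\{P_1\}$ with a specific parity or size. That reduces these cases to a finite check on $2^T$, which can be done by hand.
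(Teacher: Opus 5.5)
Your route differs from the paper's, since the paper's base family is \emph{not} determined by traces on $T$. As written, your verification fails at two concrete points.

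\textbf{First: the $A=P_1$ block on members of $\F_1$.} You claim that on $S=P_1\cup H\in\F_1$ the traces with $T$-part $P_1$ come only from $\F_1$. This is false. Let $c_k$ be the number of patterns of size $k$, so $|\F_0|=\sum_k c_k\binom{n-5}{d+1-k}$. Expanding $\binom{n-1}{d}+\binom{n-4}{d-2}$ by Vandermonde, as an identity in $n$, gives $(c_1,\dots,c_5)=(1,4,7,5,1)$. Hence every $4$-subset of $T$ is a pattern. The members $(P_1\cup\{x\})\cup Z'$ of $\F_0$ then realize $P_1\cup B$ for every $B\subsetneq H$ (for $n$ large), and $S$ itself realizes $P_1\cup H$. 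So $\vc(\Hh)\le d-3$ can never produce a missing trace in the block $A=P_1$. A member of $\F_1$ must instead be protected by some $A\subsetneq P_1$, for example by making every pattern meet $P_1$. In the paper, every member contains $a$, or contains $b$ and one of $\alpha,\gamma$, so $\varnothing$ is missing on $\{a,\alpha,\gamma\}\cup Q$.

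\textbf{Second, and more seriously: base members can become shattered.} The step ``the analysis of $\F_0$ gives a missing trace'' does not survive adding $\F_1$. On $S=X\cup Z\in\F_0$ the new members produce exactly the traces $(P_1\cap X)\cup(H\cap Z)$. So if $|X|=3$ and $(P_1\cap X)\cup Z$ is the only trace of $S$ missing from $\F_0$, then $S$ becomes shattered as soon as $Z\in\Hh$.

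This happens for the standard base. Take the paper's family with $\Hh=\varnothing$ on $\{a,b,\alpha,\beta,\gamma\}$, and your two-cover choice $P_1=\{a,\alpha,\gamma\}$, $P_2=\{b,\beta,\gamma\}$; both are non-patterns with union $T$. For $Q\in\Hh$ and $n$ large, the only trace missing on $\{b,\alpha,\gamma\}\cup Q$ is $\{\alpha,\gamma\}\cup Q$, and the new member $\{a,\alpha,\gamma\}\cup Q$ supplies it.

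What the paper adds, and your plan lacks, is the $\Hh$-dependent transfer $\D$. For each $Q\in\Hh$ it replaces $\{a,\beta,\gamma\}\cup Q$ by $\{b,\beta,\gamma\}\cup Q$. This makes $\{\gamma\}\cup Q$ missing on $\{b,\alpha,\gamma\}\cup Q$ at no cost in size. Correspondingly, $\vc(\Hh)\le d-3$ is used not on $\F_1$ but on the base members $\{b,\alpha,\beta\}\cup R$ and $\{a,\alpha,\beta\}\cup R$ ($R\subseteq V$). There the traces $\{\alpha\}\cup J$, resp.\ $\{\beta\}\cup J$, can only come from the lifts of $\C_1$, resp.\ $\D$.

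\textbf{What a trace-determined base would need.} Such a base can be made to work, but only with a pattern list chosen for the right condition. Every pattern $X$ with $|X|\ge 3$ needs a missing trace whose $T$-part $A$ differs from $P_1\cap X$, or else (possible only when $|X|=3$) satisfies $X'\cap X\neq A$ for every pattern $X'$. For instance, on $T=\{1,\dots,5\}$ take $P_1=\{3,4,5\}$ and the patterns:
\begin{itemize}
\item the singleton $\{4\}$;
\item the pairs $\{2,3\},\{2,4\},\{2,5\},\{1,4\}$;
\item all triples except $\{1,2,5\},\{1,4,5\},\{3,4,5\}$;
\item all $4$-subsets and $T$.
\end{itemize}
Your proposal makes no such choice. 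The finite check you describe, looking for missing patterns on $4$-subsets containing $P_1$, is aimed at the wrong cases.
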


Taking any nonempty admissible residual family already gives a strict
improvement.  A convenient explicit consequence is the following.

\begin{corollary}\label{cor:explicit}
For every $d\ge 3$ and $n\ge d+3$,
\[
  \M_d(n)\ge
  \binom{n-1}{d}
  +\binom{n-4}{d-2}
  +\binom{n-6}{d-3}.
\]
\end{corollary}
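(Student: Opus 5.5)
The corollary follows from Theorem~\ref{thm:main} once we check the residual bound
\[
  \M_{d-3}(n-5)\ge\binom{n-6}{d-3}.
\]
I will obtain this from the trivial star lower bound $\M_k(m)\ge\binom{m-1}{k}$, valid for all $k\ge0$ and $m\ge k+1$, applied with $k=d-3$ and $m=n-5$.

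\textbf{The star bound.} Let $\F$ consist of all $(k+1)$-subsets of $[m]$ containing the element $1$, so $|\F|=\binom{m-1}{k}$. Suppose some $S$ with $|S|=k+1$ is shattered by $\F$.
\begin{itemize}[leftmargin=*]
\item Since $S\in\restr{\F}{S}$, there is $F\in\F$ with $F\cap S=S$. Both $F$ and $S$ have size $k+1$, so $F=S$, and hence $1\in S$.
\item Since $\emptyset\in\restr{\F}{S}$, there is $F'\in\F$ with $F'\cap S=\emptyset$. But $1\in F'$ and $1\in S$, a contradiction.
\end{itemize}
Hence $\vc(\F)\le k$, and $\M_k(m)\ge\binom{m-1}{k}$.

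\textbf{Checking the parameters.} For $d\ge3$ and $n\ge d+3$ we have $k=d-3\ge0$ and $m=n-5\ge d-2=k+1$, so $\M_{d-3}(n-5)$ is defined and the star bound applies. In the degenerate case $d=3$, the family is the single singleton $\{1\}$. It has VC-dimension $0$ and size $1=\binom{n-6}{0}$, matching the formula.

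Substituting $\M_{d-3}(n-5)\ge\binom{n-6}{d-3}$ into Theorem~\ref{thm:main} gives Corollary~\ref{cor:explicit}. No step is a real obstacle; the only care needed is confirming that the hypothesis $n\ge d+3$ is exactly what makes the residual parameters admissible.
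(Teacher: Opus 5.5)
Your proposal is correct and is essentially the paper's proof: the paper also takes the star $\{H\in\binom{V}{d-2}:v\in H\}$ as the residual family, notes that an intersecting family cannot realize the empty trace on any of its members, and concludes via Lemma~\ref{lem:uniform-criterion}, which you re-derive inline. The one step you leave unstated, that sets of size larger than $k+1$ need no separate check, is immediate: their full trace cannot be realized by a $(k+1)$-set, and heredity of shattering gives the same conclusion.
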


The construction has two stages.  First, a pair of $d$-uniform families
covers the complete $d$-uniform layer, and its overlap is arranged so that
no overlap member is shattered by either family.  Second, two new vertices
lift this covering pair to a $(d+1)$-uniform family.  The classical numerical
bound is recovered by taking the residual family to be empty; the new term
comes from recursively enlarging the overlap.

\section{A uniform shattering criterion}

We record a standard observation that will be used repeatedly.

\begin{lemma}\label{lem:uniform-criterion}
Let $\F\subseteq\binom{X}{r}$.  Then $\vc(\F)\le r-1$ if and only if no
member of $\F$ is shattered by $\F$.
\end{lemma}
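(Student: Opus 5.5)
Both directions follow from the fact that a set larger than $r$ cannot be shattered by an $r$-uniform family, so only sets of size at most $r$ matter. The forward direction is immediate; the backward direction uses the hereditary nature of shattering.

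For the forward direction, suppose $\vc(\F)\le r-1$. Every member $F\in\F$ has $|F|=r>r-1$, so $F$ is not shattered by $\F$.

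For the converse, suppose no member of $\F$ is shattered, and assume for contradiction that some $S\subseteq X$ with $|S|\ge r$ is shattered. Since shattering passes to subsets (if $\restr{\F}{S}=2^S$ and $T\subseteq S$, then $\restr{\F}{T}=2^T$ by intersecting with $T$), we may replace $S$ by a subset of size exactly $r$. In particular $S\in\restr{\F}{S}$, so there is $F\in\F$ with $F\cap S=S$, i.e.\ $S\subseteq F$. Both sets have size $r$, so $F=S$. Thus $S$ is a member of $\F$ that is shattered by $\F$, a contradiction. Hence every shattered set has size at most $r-1$, and $\vc(\F)\le r-1$.

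There is no real obstacle here. The one point to state carefully is the reduction from $|S|\ge r$ to $|S|=r$ via heredity of shattering, because this is what forces the witness of the full trace to coincide with $S$ and hence lie in $\F$.
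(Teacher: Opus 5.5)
Your proposal is correct and matches the paper's proof. Both directions are handled the same way: a shattered member of size $r$ forces $\vc(\F)\ge r$, and conversely heredity yields a shattered $r$-set $S$, whose full trace can only be realized by $F=S$, so $S$ is a shattered member.
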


\begin{proof}
If some $F\in\F$ is shattered, then $\vc(\F)\ge |F|=r$.

Conversely, suppose $\vc(\F)\ge r$.  Since shattering is hereditary, there
is an $r$-set $S$ shattered by $\F$.  The full trace $S$ must be realized by
some $F\in\F$ with $F\cap S=S$.  As $|F|=|S|=r$, this forces $F=S$.
Thus a member of $\F$ is shattered.
\end{proof}

\section{A two-cover lifting lemma}

The following notion isolates the mechanism behind the construction.

\begin{definition}\label{def:admissible}
Let $W$ be finite and let $\A,\B\subseteq\binom{W}{d}$.  We call
$(\A,\B)$ an \emph{admissible covering pair} if
\[
  \A\cup\B=\binom{W}{d}
\]
and every $S\in\A\cap\B$ is shattered by neither $\A$ nor $\B$.
\end{definition}

For two new vertices $a,b\notin W$, define the lift
\begin{equation}\label{eq:lift}
\begin{split}
  \mathcal L(\A,\B)
  ={}&
  \bigl\{\{a,b\}\cup R:R\in\binom{W}{d-1}\bigr\}\\
  &{}\cup
  \bigl\{\{a\}\cup A:A\in\A\bigr\}
  \cup
  \bigl\{\{b\}\cup B:B\in\B\bigr\}.
\end{split}
\end{equation}

\begin{lemma}[Two-cover lifting]\label{lem:lifting}
If $(\A,\B)$ is an admissible covering pair on an $m$-element set $W$,
then
\[
  \mathcal L(\A,\B)\subseteq\binom{W\cup\{a,b\}}{d+1},
  \qquad
  \vc\bigl(\mathcal L(\A,\B)\bigr)\le d,
\]
and
\[
  \bigl|\mathcal L(\A,\B)\bigr|
  =
  \binom{m+1}{d}+|\A\cap\B|.
\]
\end{lemma}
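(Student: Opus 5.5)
The plan is to verify the three claims in turn: uniformity and the size count are bookkeeping, and the VC-dimension bound is a case analysis via Lemma~\ref{lem:uniform-criterion}.

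\emph{Uniformity and size.} Each member of $\mathcal L(\A,\B)$ is a $(d+1)$-subset of $W\cup\{a,b\}$, since $a,b\notin W$, $|R|=d-1$ and $|A|=|B|=d$. The three parts of \eqref{eq:lift} are pairwise disjoint, because their members meet $\{a,b\}$ in $\{a,b\}$, $\{a\}$ and $\{b\}$ respectively. Within each part, distinct sets $R$, $A$ or $B$ give distinct members. Hence
\[
|\mathcal L(\A,\B)|=\binom{m}{d-1}+|\A|+|\B|.
\]
Since $\A\cup\B=\binom{W}{d}$, inclusion--exclusion gives $|\A|+|\B|=\binom{m}{d}+|\A\cap\B|$. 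Pascal's rule $\binom{m}{d-1}+\binom{m}{d}=\binom{m+1}{d}$ then yields the stated count.

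\emph{VC-dimension.} By Lemma~\ref{lem:uniform-criterion} with $r=d+1$, it suffices to show that no member $F$ of $\mathcal L:=\mathcal L(\A,\B)$ is shattered by $\mathcal L$. The key structural fact is that every member of $\mathcal L$ contains $a$ or $b$. I split into three cases according to the type of $F$.

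\emph{Case $F=\{a,b\}\cup R$.} Every $G\in\mathcal L$ meets $\{a,b\}\subseteq F$, so $G\cap F\neq\emptyset$. Thus the trace $\emptyset$ is never realized, and $F$ is not shattered.

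\emph{Case $F=\{a\}\cup S$ with $S\in\A$.} Look at traces on $F$ that avoid $a$. A member $G$ with $a\notin G\cap F$ cannot contain $a$, so it must be of the form $\{b\}\cup B$ with $B\in\B$. Its trace on $F$ is then $B\cap S$. Therefore the traces of $\mathcal L$ on $F$ not containing $a$ are exactly the traces $\restr{\B}{S}$.

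If $F$ were shattered, these traces would be all of $2^S$, so $\B$ would shatter $S$. In particular the full trace $S$ would be realized by some $B\in\B$ with $B\supseteq S$. Since $|B|=|S|=d$, this forces $S=B\in\B$, so $S\in\A\cap\B$. But admissibility says no member of $\A\cap\B$ is shattered by $\B$, a contradiction.

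\emph{Case $F=\{b\}\cup S$ with $S\in\B$.} This is symmetric, exchanging the roles of $a,\A$ and $b,\B$.

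I expect the only delicate step to be the middle case. The traces that do contain $a$ come from both $\{a\}\cup A$ and $\{a,b\}\cup R$, and the $R$-type members can realize many subsets, so one should not try to control them. The argument works precisely because it looks only at the traces avoiding $a$, which isolates $\restr{\B}{S}$ and makes admissibility apply directly.
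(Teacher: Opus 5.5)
Your proposal is correct and follows the paper's proof essentially step for step: you reduce via Lemma~\ref{lem:uniform-criterion} to members of the lift, isolate the traces avoiding $a$ (which are exactly $\restr{\B}{S}$), and combine the equal-size argument with admissibility, with the count done by inclusion--exclusion and Pascal's rule. The only cosmetic differences are these: for $\{a,b\}\cup R$ you exhibit the missing trace $\varnothing$ where the paper uses $R$, and both work because every member meets $\{a,b\}$; and you phrase the middle case as a contradiction, where the paper splits into $S\notin\B$ and $S\in\A\cap\B$.
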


\begin{proof}
Uniformity is immediate.  Put $\F=\mathcal L(\A,\B)$.  By
Lemma~\ref{lem:uniform-criterion}, it suffices to show that no member of
$\F$ is shattered.

First let $E=\{a,b\}\cup R$, where $R\in\binom{W}{d-1}$.  The trace $R$
is missing on $E$, since every member of $\F$ contains at least one of
$a,b$.

Next let $E=\{a\}\cup S$ with $S\in\A$.  A trace on $E$ that does not
contain $a$ can only be produced by a member $\{b\}\cup B$ with
$B\in\B$, and the resulting trace is $B\cap S$.  If $S\notin\B$, then
the full trace $S$ is absent from $\restr{\B}{S}$, because both $B$ and
$S$ have size $d$.  If $S\in\A\cap\B$, then $\B$ does not shatter $S$
by admissibility.  In either case, some trace not containing $a$ is
missing on $E$.  Hence $E$ is not shattered.

The case $E=\{b\}\cup S$ with $S\in\B$ is symmetric.

Finally,
\[
\begin{split}
  |\F|
  &=
  \binom{m}{d-1}+|\A|+|\B|\\
  &=
  \binom{m}{d-1}
  +\left|\binom{W}{d}\right|
  +|\A\cap\B|\\
  &=
  \binom{m+1}{d}+|\A\cap\B|,
\end{split}
\]
as required.
\end{proof}

\section{A recursively enlarged overlap}\label{sec:overlap}

We now construct an admissible covering pair with a large overlap.

Let $d\ge 3$, let
\[
  W=\{\alpha,\beta,\gamma\}\mathbin{\dot\cup}V,
\]
and let
\[
  \Hh\subseteq\binom{V}{d-2},
  \qquad
  \vc(\Hh)\le d-3.
\]
Partition $\binom{W}{d}$ into
\begin{align}
  \A_0
  &=
  \{S\in\binom{W}{d}:\alpha\notin S,\ \beta\in S\}
  \nonumber\\
  &\hspace{2cm}\cup
  \{S\in\binom{W}{d}:S\cap\{\alpha,\beta,\gamma\}=\varnothing\},
  \label{eq:A0}\\
  \B_0
  &=
  \{S\in\binom{W}{d}:\alpha\in S\}
  \nonumber\\
  &\hspace{2cm}\cup
  \{S\in\binom{W}{d}:\alpha,\beta\notin S,\ \gamma\in S\}.
  \label{eq:B0}
\end{align}
Thus $\A_0\mathbin{\dot\cup}\B_0=\binom{W}{d}$.

Define
\begin{align}
  \C_0
  &=
  \bigl\{
    \{\alpha,\beta\}\cup R:
    R\in\binom{W\setminus\{\alpha,\beta\}}{d-2}
  \bigr\},
  \label{eq:C0}\\
  \C_1
  &=
  \bigl\{
    \{\alpha,\gamma\}\cup Q:
    Q\in\Hh
  \bigr\},
  \label{eq:C1}\\
  \D
  &=
  \bigl\{
    \{\beta,\gamma\}\cup Q:
    Q\in\Hh
  \bigr\}.
  \label{eq:D}
\end{align}
Finally, put
\begin{equation}\label{eq:AB}
  \A=(\A_0\setminus\D)\cup\C_0\cup\C_1,
  \qquad
  \B=\B_0\cup\D.
\end{equation}

\begin{lemma}\label{lem:pair}
The pair $(\A,\B)$ defined in \eqref{eq:A0}--\eqref{eq:AB} is an
admissible covering pair, and
\[
  |\A\cap\B|
  =
  \binom{|W|-2}{d-2}+|\Hh|.
\]
\end{lemma}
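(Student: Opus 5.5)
The plan is to verify three things directly from the definitions \eqref{eq:A0}--\eqref{eq:AB}: that $\A\cup\B=\binom{W}{d}$, that $\A\cap\B=\C_0\mathbin{\dot\cup}\C_1$, and that no member of $\C_0\cup\C_1$ is shattered by $\A$ or by $\B$. In each case I will exhibit an explicit missing trace. Throughout, $W\setminus\{\alpha,\beta,\gamma\}=V$.

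\emph{Covering and overlap.} Every member of $\D$ contains $\beta,\gamma$ and omits $\alpha$, so $\D\subseteq\A_0$. Hence $\A\cup\B\supseteq(\A_0\setminus\D)\cup\D\cup\B_0=\binom{W}{d}$. Both $\C_0$ and $\C_1$ consist of sets containing $\alpha$, so they lie in $\B_0$. The set $\A_0\setminus\D$ is disjoint from both $\B_0$ and $\D$. Therefore $\A\cap\B=\C_0\cup\C_1$. This union is disjoint because members of $\C_1$ omit $\beta$. Its size is $\binom{|W|-2}{d-2}+|\Hh|$, since $Q\mapsto\{\alpha,\gamma\}\cup Q$ is injective.

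\emph{Members of $\C_0$.} Let $S=\{\alpha,\beta\}\cup R$. First consider shattering by $\A$ and look at traces containing $\alpha$ but not $\beta$. Among the members of $\A$, only those in $\C_1$ contain $\alpha$ and omit $\beta$, and these all contain $\gamma$.
\begin{itemize}[leftmargin=*]
\item If $\gamma\in R$, every such trace contains $\gamma$, so the trace $\{\alpha\}$ is missing.
\item If $\gamma\notin R$, then $R\in\binom{V}{d-2}$ and these traces are exactly $\{\alpha\}\cup(Q\cap R)$ with $Q\in\Hh$. Since $\vc(\Hh)\le d-3<|R|$, the family $\Hh$ does not shatter $R$, so some trace $\{\alpha\}\cup T$ with $T\subseteq R$ is missing.
\end{itemize}
Shattering by $\B$ is handled the same way with the roles of $\alpha,\beta$ swapped. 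Members of $\B$ containing $\beta$ but not $\alpha$ are exactly the members of $\D$, all of which contain $\gamma$, and the same two cases apply.

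\emph{Members of $\C_1$.} Let $S=\{\alpha,\gamma\}\cup Q$ with $Q\in\Hh$.
\begin{itemize}[leftmargin=*]
\item \emph{Shattering by $\B$.} Every member of $\B$ omitting $\alpha$ contains $\gamma$: it lies either in the second part of $\B_0$ or in $\D$. So the empty trace is missing on $S$.
\item \emph{Shattering by $\A$.} Members of $\A$ that contain $\gamma$ and omit $\alpha$ come only from $\A_0\setminus\D$, and are therefore of the form $\{\beta,\gamma\}\cup T$ with $T\in\binom{V}{d-2}\setminus\Hh$. The trace $\{\gamma\}\cup Q$ would force $T\supseteq Q$, hence $T=Q\in\Hh$, which is excluded. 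So this trace is missing.
\end{itemize}

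The main obstacle is the bookkeeping of exactly which members of $\A$ and $\B$ avoid $\alpha$ or $\beta$. The one genuinely nontrivial input is the use of $\vc(\Hh)\le d-3$ in the case $\gamma\notin R$ for $\C_0$. I would double-check that case first, since it is the only place the hypothesis on $\Hh$ enters.
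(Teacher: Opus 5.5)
Your proof is correct and follows the paper's argument essentially step for step. It uses the same covering and overlap bookkeeping and the same missing traces: $\{\alpha\}$ or $\{\alpha\}\cup J$ for $\A$ and $\{\beta\}$ or $\{\beta\}\cup J$ for $\B$ on members of $\C_0$, and $\{\gamma\}\cup Q$ for $\A$ and $\varnothing$ for $\B$ on members of $\C_1$. The only difference is presentational: you first isolate the members of $\A$ containing $\alpha$ but not $\beta$ (exactly $\C_1$) and the members of $\B$ containing $\beta$ but not $\alpha$ (exactly $\D$), which packages the paper's $\C_0$ case slightly more cleanly.
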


\begin{proof}
The families $\A_0$ and $\B_0$ form a partition of $\binom{W}{d}$.
Moreover,
\[
  \C_0\cup\C_1\subseteq\B_0,
  \qquad
  \D\subseteq\A_0,
\]
and the three families $\C_0,\C_1,\D$ are pairwise disjoint.  Hence
\[
  \A\cup\B=\binom{W}{d},
  \qquad
  \A\cap\B=\C_0\cup\C_1.
\]
The asserted count follows immediately.  It remains to verify that every
member of the overlap is shattered by neither $\A$ nor $\B$.

Let
\[
  S=\{\alpha,\beta\}\cup R\in\C_0.
\]
First suppose that $\gamma\in R$.  We claim that
\[
  \{\alpha\}\notin\restr{\A}{S},
  \qquad
  \{\beta\}\notin\restr{\B}{S}.
\]
Indeed, members of $\A_0$ do not contain $\alpha$; members of $\C_0$
contain $\beta$; and members of $\C_1$ contain $\gamma$.  Thus no member
of $\A$ has trace $\{\alpha\}$ on $S$.  Similarly, a member of $\B_0$
either contains $\alpha$ or, if it avoids $\alpha$ and $\beta$, contains
$\gamma$; every member of $\D$ contains both $\beta$ and $\gamma$.
Thus no member of $\B$ has trace $\{\beta\}$ on $S$.

Now suppose that $\gamma\notin R$, so $R\subseteq V$.  Since
$|R|=d-2$ and $\vc(\Hh)\le d-3$, the set $R$ is not shattered by
$\Hh$.  Choose
\[
  J\subseteq R
  \quad\text{such that}\quad
  J\notin\restr{\Hh}{R}.
\]
We claim that
\[
  \{\alpha\}\cup J\notin\restr{\A}{S},
  \qquad
  \{\beta\}\cup J\notin\restr{\B}{S}.
\]
For the first claim, members of $\A_0$ avoid $\alpha$, members of
$\C_0$ contain $\beta$, and a member
$\{\alpha,\gamma\}\cup Q\in\C_1$ has trace
$\{\alpha\}\cup(Q\cap R)$ on $S$.  Such a trace cannot equal
$\{\alpha\}\cup J$ by the choice of $J$.  For the second claim,
members of the first part of $\B_0$ contain $\alpha$, members of the
second part of $\B_0$ avoid $\beta$, and a member
$\{\beta,\gamma\}\cup Q\in\D$ has trace
$\{\beta\}\cup(Q\cap R)$ on $S$.  Again the choice of $J$ rules out the
desired trace.  Therefore $S$ is shattered by neither family.

It remains to consider
\[
  S=\{\alpha,\gamma\}\cup Q\in\C_1,
  \qquad Q\in\Hh.
\]
We claim that
\[
  \{\gamma\}\cup Q\notin\restr{\A}{S},
  \qquad
  \varnothing\notin\restr{\B}{S}.
\]
For the first claim, a member of $\A_0$ with trace
$\{\gamma\}\cup Q$ would have to contain all $d-1$ elements of
$\{\gamma\}\cup Q$, avoid $\alpha$, and contain $\beta$.  The only
possibility is $\{\beta,\gamma\}\cup Q$, which belongs to $\D$ and was
deleted from $\A_0$.  Members of the second part of $\A_0$ avoid
$\gamma$, while every member of $\C_0\cup\C_1$ contains $\alpha$.
Thus the trace is absent.  Finally, every member of $\B$ contains at
least one of $\alpha,\gamma$: this is clear for both parts of $\B_0$
and for $\D$.  Hence the empty trace is absent from $\restr{\B}{S}$.

Thus every member of $\A\cap\B$ is shattered by neither $\A$ nor
$\B$, completing the proof.
\end{proof}

\section{Proof of the recursive bound}

\begin{proof}[Proof of Theorem~\ref{thm:main}]
Let $V$ be a set of size $n-5$, and choose
\[
  \Hh\subseteq\binom{V}{d-2}
\]
with
\[
  \vc(\Hh)\le d-3,
  \qquad
  |\Hh|=\M_{d-3}(n-5).
\]
Take three new vertices $\alpha,\beta,\gamma$ and set
\[
  W=\{\alpha,\beta,\gamma\}\mathbin{\dot\cup}V,
  \qquad |W|=n-2.
\]
Apply Lemma~\ref{lem:pair} to obtain an admissible covering pair
$(\A,\B)$ satisfying
\[
  |\A\cap\B|
  =
  \binom{n-4}{d-2}+\M_{d-3}(n-5).
\]
Now add two further vertices $a,b$ and apply
Lemma~\ref{lem:lifting}.  The resulting family is
$(d+1)$-uniform on an $n$-element set, having VC-dimension at most $d$ and size
\[
\begin{split}
  \binom{n-1}{d}+|\A\cap\B|
  &=
  \binom{n-1}{d}
  +\binom{n-4}{d-2}
  +\M_{d-3}(n-5).
\end{split}
\]
This proves the theorem.
\end{proof}

\begin{proof}[Proof of Corollary~\ref{cor:explicit}]
Fix $v\in V$ and take the star
\[
  \Hh=
  \{H\in\binom{V}{d-2}:v\in H\}.
\]
It is intersecting, and hence it cannot shatter any member of itself:
the empty trace on such a member would require a disjoint member.
Lemma~\ref{lem:uniform-criterion} therefore gives
$\vc(\Hh)\le d-3$.  Moreover,
\[
  |\Hh|=\binom{n-6}{d-3}.
\]
Substituting into Theorem~\ref{thm:main} proves the result.
\end{proof}

Theorem~\ref{thm:main} can be iterated.

\begin{proposition}\label{prop:iteration}
Let $t\ge 1$, $3t\le d$, and $n\ge d+2t+1$.  Then
\[
\begin{split}
  \M_d(n)\ge
  \sum_{j=0}^{t-1}
  \left[
    \binom{n-5j-1}{d-3j}
    +
    \binom{n-5j-4}{d-3j-2}
  \right]
  +\M_{d-3t}(n-5t).
\end{split}
\]
\end{proposition}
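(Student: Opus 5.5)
Induction on $t$, applying Theorem~\ref{thm:main} once at each level. For the $t$-th application we need its hypotheses $d'\ge 3$ and $n'\ge d'+3$ at parameters $d'=d-3(t-1)$ and $n'=n-5(t-1)$.

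\emph{Base case $t=1$.} Here $3\le d$ and $n\ge d+3$. These are exactly the hypotheses of Theorem~\ref{thm:main}, and the $j=0$ summand is $\binom{n-1}{d}+\binom{n-4}{d-2}$. So the statement is precisely Theorem~\ref{thm:main}.

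\emph{Inductive step.} Suppose the claim holds for $t-1\ge 1$, and let $3t\le d$ and $n\ge d+2t+1$.
\begin{enumerate}[label=(\roman*)]
\item The hypotheses for $t-1$ hold: $3(t-1)\le d$, and $n\ge d+2t+1\ge d+2(t-1)+1$. The inductive hypothesis gives the sum over $j=0,\dots,t-2$ plus $\M_{d-3(t-1)}(n-5(t-1))$.
\item Put $d'=d-3(t-1)$ and $n'=n-5(t-1)$. Then $d'\ge 3$ because $3t\le d$.
\item We also need $n'\ge d'+3$, i.e.\ $n-5t+5\ge d-3t+6$, i.e.\ $n\ge d+2t+1$. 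This is exactly the assumed bound, which explains where $d+2t+1$ comes from.
\item Theorem~\ref{thm:main} then gives
\[
\M_{d'}(n')\ge\binom{n'-1}{d'}+\binom{n'-4}{d'-2}+\M_{d'-3}(n'-5).
\]
In the original variables this reads $\binom{n-5(t-1)-1}{d-3(t-1)}+\binom{n-5(t-1)-4}{d-3(t-1)-2}+\M_{d-3t}(n-5t)$.
\item The two binomials are exactly the $j=t-1$ summand. Substituting into the bound from (i) completes the induction.
\end{enumerate}

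The quantity $\M_{d-3t}(n-5t)$ must be well defined, which needs $n-5t\ge (d-3t)+1$. This follows from $n\ge d+2t+1$, so no extra assumption is required.

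The only real obstacle is bookkeeping. One must check that the hypothesis $n\ge d+2t+1$ is what Theorem~\ref{thm:main} needs at the last level, and that it is inherited by all earlier levels, since $d+2t+1$ increases with $t$. The monotonicity in (i) handles the inheritance, so I expect no genuine difficulty.
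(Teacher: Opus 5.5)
Your proof is correct and follows the same route as the paper, which chains Theorem~\ref{thm:main} along $(d-3j,\,n-5j)$ for $j=0,\dots,t-1$ and notes that $n\ge d+2t+1$ gives $n-5j\ge(d-3j)+3$ at every step. Your induction on $t$ writes out that one-line argument, and you also check that $\M_{d-3t}(n-5t)$ is defined, which the paper leaves implicit.
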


\begin{proof}
Apply Theorem~\ref{thm:main} successively to
\[
  (d,n),\ (d-3,n-5),\ldots,(d-3(t-1),n-5(t-1)).
\]
The numerical condition ensures that the hypothesis
$n-5j\ge(d-3j)+3$ holds at every step.
\end{proof}

\begin{remark}
Taking $\Hh=\varnothing$ in the construction of
Section~\ref{sec:overlap}~\eqref{eq:A0}--\eqref{eq:AB} yields the numerical lower bound
\eqref{eq:classical-lower}.  Thus the present construction enlarges the
classical overlap by the additional family $\C_1$, while the transfer of
$\D$ from $\A_0$ to $\B$ preserves the trace obstruction.
\end{remark}

\section{Concluding remarks}

The two-cover formulation suggests a separate extremal parameter.  For an
$m$-element set $W$, let
\[
  \mathsf{G}_d(m)
  =
  \max |\A\cap\B|,
\]
where the maximum is over admissible covering pairs
$\A,\B\subseteq\binom{W}{d}$.  Lemma~\ref{lem:lifting} gives
\[
  \M_d(m+2)\ge \binom{m+1}{d}+\mathsf{G}_d(m),
\]
while Lemma~\ref{lem:pair} yields
\[
  \mathsf{G}_d(m)
  \ge
  \binom{m-2}{d-2}+\M_{d-3}(m-3).
\]
Determining, or sharply bounding, $\mathsf{G}_d(m)$ appears to be a natural
way to organize further lower-bound constructions.

The best current general upper bound has the form
\[
  \M_d(n)\le \binom{n-1}{d}+O_d(n^{d-2})
\]
for fixed $d$ and large $n$~\cite{YangYu}.  Corollary~\ref{cor:explicit}
improves the lower bound by a term of order $n^{d-3}$ beyond the
Ahlswede--Khachatrian/Mubayi--Zhao construction.  Closing the remaining
gap requires additional control of the admissible overlap.

\bibliographystyle{plain}
\bibliography{references}

\end{document}